\begin{document}
\title{Multicolor Ramsey numbers via pseudorandom \\ graphs}
\author{Xiaoyu He\thanks{Supported by NSF Graduate Research Fellowship DGE-1656518.}\\
\small Department of Mathematics\\[-0.8ex]
\small Stanford University\\[-0.8ex] 
\small Stanford, U.S.A.\\
\small\tt alkjash@stanford.edu\\
\and
Yuval Wigderson\thanks{Supported by NSF Graduate Research Fellowship DGE-1656518.}\\
\small Department of Mathematics\\[-0.8ex]
\small Stanford University\\[-0.8ex] 
\small Stanford, U.S.A.\\
\small\tt yuvalwig@stanford.edu}

\maketitle
\begin{abstract}
A weakly optimal $K_s$-free $(n,d,\lambda)$-graph is a $d$-regular $K_s$-free graph on $n$ vertices with  $d=\Theta(n^{1-\alpha})$ and spectral expansion $\lambda=\Theta(n^{1-(s-1)\alpha})$, for some fixed $\alpha>0$. Such a graph is called optimal if additionally $\alpha = \frac{1}{2s-3}$. We prove that if $s_{1},\ldots,s_{k}\ge3$
are fixed positive integers and weakly optimal $K_{s_{i}}$-free pseudorandom
graphs exist for each $1\le i\le k$, then the multicolor Ramsey numbers
satisfy
\[
\Omega\Big(\frac{t^{S+1}}{\log^{2S}t}\Big)\le r(s_{1},\ldots,s_{k},t)\le O\Big(\frac{t^{S+1}}{\log^{S}t}\Big),
\]
as $t\rightarrow\infty$, where $S=\sum_{i=1}^{k}(s_{i}-2)$. This generalizes previous results of Mubayi and Verstra\"ete, who proved the case $k=1$, and Alon and R\"odl, who proved the case $s_1=\cdots = s_k = 3$. Both previous results used the existence of optimal rather than weakly optimal $K_{s_i}$-free graphs.
\end{abstract}

\section{Introduction}
The central object of study in Ramsey theory is the Ramsey number
$r(s_{1},\ldots,s_{k})$, which is defined to be the smallest posititive
integer $N$ such that in any $k$-coloring of the complete graph
$K_{N}$, there is a monochromatic $K_{s_{i}}$ of some color $i\in \{1,\ldots ,k\}$.

In the case $k=2$, the order of growth of $r(3,t)$ as $t\rightarrow\infty$
was determined to be
\[
r(3,t)=\Theta\Big(\frac{t^{2}}{\log t}\Big)
\]
by Ajtai, Koml\'os, and Szemer\'edi \cite{AjKoSz} and Kim \cite{Kim}.
It is one of the central open problems in Ramsey theory to generalize
these bounds and determine the growth rates of $r(s,t)$ for all fixed
$s\ge3$ and $t\rightarrow\infty$. Unfortunately, when $s\ge4$ even
the polynomial order of $r(s,t)$ is not known, and the best known
bounds are
\[
\Omega\Big(\frac{t^{\frac{s+1}{2}}}{(\log t)^{\frac{s+1}{2}-\frac{1}{s-2}}}\Big)\le r(s,t)\le O\Big(\frac{t^{s-1}}{\log^{s-2}t}\Big).
\]
The lower bound is due to Bohman and Keevash \cite{BoKe}, while the
upper bound is again due to Ajtai, Koml\'os, and Szemer\'edi \cite{AjKoSz}.

Recently, Mubayi and Verstra\"ete \cite{MuVe} connected the growth rate
of $r(s,t)$ to a problem in the theory of pseudorandom
graphs. Recall that an {\it $(n,d,\lambda)$-graph} is a $d$-regular graph on $n$ vertices such that all of its nontrivial eigenvalues have absolute value at most $\lambda$.
\begin{definition}
A {\it family of weakly optimal $K_{s}$-free $(n,d,\lambda)$-graphs} is a collection of $K_{s}$-free
$(n_i,d_i,\lambda_i)$-graphs for which $d_i=\Theta(n_i^{1-\alpha})$ and
$\lambda_i =\Theta(n_i^{1-(s-1)\alpha})$ as $n_i\rightarrow\infty$, for some fixed $\alpha>0$. We call $\alpha$ the {\it parameter of weak optimality}. If, moreover, $\lambda_i=\Theta(\sqrt{d_i})$ (so that $\alpha=\frac 1{2s-3}$), then this family is said to be {\it optimal.}
\end{definition}

Note that $\alpha$ and the implicit constants may not depend on $i$. Informally, we say that weakly optimal $K_{s}$-free $(n,d,\lambda)$-graphs exist if there exists a family of weakly optimal $K_{s}$-free $(n,d,\lambda)$-graphs, for some fixed $\alpha>0$. Note that the $t$-blowup of an $(n,d,\lambda)$-graph is an $(nt,dt,\lambda t)$-graph with the same clique number; thus, the existence of optimal $K_s$-free $(n,d,\lambda)$-graphs implies the existence of weakly optimal $K_s$-free $(n,d,\lambda)$-graphs for all $0 < \alpha \le \frac{1}{2s-3}$ (this fact was observed already by Krivelevich, Sudakov, and Szab\'o \cite{KrSuSz} when $s=3$). Because of this, the existence of weakly optimal $K_s$-free $(n,d,\lambda)$-graphs is indeed weaker than the existence of optimal ones. 

 Sudakov, Szab\'o, and Vu~\cite{SuSzVu} conjectured the existence of optimal $K_{s}$-free $(n,d,\lambda)$-graphs for all $s\ge 3$ and all $n$; such graphs where constructed by Alon~\cite{Al} in the case $s=3$ but the conjecture remains open for $s\ge 4$ (see \cite{BiIhPe} for the best known construction for $s\ge5$, which agrees with Alon's bound for $s=4$). Conditional on this conjecture, Mubayi and Verstra\"ete showed that $r(s,t)$
grows like $t^{s-1}$ up to polylogarithmic factors.
\begin{theorem}
\label{thm:mubayi-verstraete}(Mubayi and Verstra\"ete \cite{MuVe}.) If optimal $K_{s}$-free
$(n,d,\lambda)$-graphs exist for all $n$, then
\[
\Omega\Big(\frac{t^{s-1}}{\log^{2s-4}t}\Big)\le r(s,t)\le O\Big(\frac{t^{s-1}}{\log^{s-2}t}\Big),
\]
where the implicit constants may depend only on $s$.
\end{theorem}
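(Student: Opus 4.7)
The upper bound $r(s,t) = O(t^{s-1}/\log^{s-2} t)$ is the classical theorem of Ajtai, Komlós, and Szemerédi \cite{AjKoSz}, so my plan concentrates on the lower bound $r(s,t) \ge \Omega(t^{s-1}/\log^{2s-4} t)$. The goal is to construct, for each large $t$, a $K_s$-free graph $H$ on $N = \Omega(t^{s-1}/\log^{2s-4} t)$ vertices with $\alpha(H) < t$; any such $H$ witnesses $r(s,t) > N$. I would start from an optimal $K_s$-free $(n,d,\lambda)$-graph $G$ guaranteed by hypothesis, with $d = \Theta(n^{(2s-4)/(2s-3)})$ and $\lambda = \Theta(n^{(s-2)/(2s-3)})$, and take $n$ to be a polynomial in $t$ with polylogarithmic corrections. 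Note that the Hoffman bound alone gives only $\alpha(G) \le \lambda n/d = \Theta(n^{(s-1)/(2s-3)})$, which by itself would yield the far weaker estimate $r(s,t) \gtrsim t^{(2s-3)/(s-1)}$, so one must do something more.

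The candidate Ramsey graph is a random induced subgraph $H = G[V']$, where each vertex of $G$ is kept independently with probability $p$. Since $K_s$-freeness is inherited, only the bound on $\alpha(H)$ requires work, and one controls it by bounding the expected number of independent $t$-sets in $H$ by $p^{t} I_t(G)$, where $I_t(G)$ counts the independent $t$-sets in $G$. The key estimate is a strong upper bound on $I_t(G)$: using the expander mixing lemma, every $t$-subset of $V(G)$ has at least $\Omega(dt^2/n)$ internal edges once $t \gg \lambda n / d$, and a greedy ``neighborhood deletion'' argument (iteratively selecting vertices of a putative independent set and discarding the forbidden neighbors, with pseudorandom corrections bounded by $\lambda$) should yield $I_t(G) \le \binom{n}{t} \exp(-\Omega(dt^2/n))$. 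Requiring $p^{t} I_t(G) < 1$ rearranges to $pn \lesssim t \exp(\Omega(dt/n))$, which is maximized over valid $n$ by setting $dt/n \sim \log t$, equivalently $n \sim (t/\log t)^{2s-3}$; this choice produces $N = pn = \Omega(t^{s-1}/\log^{2s-4} t)$, as required.

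The main obstacle, as I see it, is establishing the sharp bound on $I_t(G)$ with the correct exponent of $\log t$. A first-moment argument that pretends $G$ is a random $d$-regular graph is not legitimate for general $(n,d,\lambda)$-graphs; indeed, Alon's $K_3$-free construction has far more maximal independent sets than a random $d$-regular graph of matching density. To get around this one must exploit the $K_s$-free hypothesis structurally, most plausibly by a recursive argument using that the neighborhood of any vertex in $G$ is itself a $K_{s-1}$-free pseudorandom graph of predictable parameters, so that the count of independent $t$-sets reduces to a similar count one dimension down. I expect this is precisely where the optimality hypothesis $\alpha = 1/(2s-3)$ (rather than the weaker ``weak optimality'') enters essentially, and also where the polylogarithmic gap $\log^{2s-4} t$ versus the conjectural $\log^{s-2} t$ is introduced.
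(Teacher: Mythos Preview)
Your framework---randomly subsample an optimal $K_s$-free $(n,d,\lambda)$-graph $G$ and bound $\mathbb{E}[i_t(H)]=p^t\,i_t(G)$---matches the paper (which uses a random blowup, equivalent to subsampling when $k=1$). The gap is entirely in your estimate for $i_t(G)$. The bound $i_t(G)\le\binom{n}{t}\exp\bigl(-\Omega(dt^2/n)\bigr)$ with any fixed implied constant is simply false for these graphs: it makes no reference to $\lambda$ and would force $\alpha(G)\lesssim (n/d)\log n=n^{1/(2s-3)}\log n$, contradicting the Ajtai--Koml\'os--Szemer\'edi bound $\alpha(G)\gtrsim n^{1/(s-1)}$ valid for every $K_s$-free graph. (Equivalently, your rearrangement $N\lesssim t\exp(\Omega(dt/n))$, optimized honestly together with the constraint $p\le 1$, gives $N\sim(t/\log t)^{2s-3}$, not $t^{s-1}/\log^{2s-4}t$; this already exceeds the upper bound you quote.) The correct replacement is the Alon--R\"odl lemma (Lemma~\ref{lem:alon-rodl}): for \emph{any} $(n,d,\lambda)$-graph and $t\ge 2n\log^2 n/d$, the number of pairwise-nonadjacent $t$-tuples is at most $(4en\lambda/d)^t$. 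Its proof \emph{is} a greedy vertex-by-vertex argument via the expander mixing lemma, but the point is not that each step removes $\sim d$ candidates; rather, at all but $O((n/d)\log n)$ steps the next vertex must come from a set of ``atypical'' candidates of size at most $2n\lambda/d$, whence the $(n\lambda/d)^t$ shape. Substituting this gives $p^t i_t(G)\le\bigl(Cpn\lambda/(dt)\bigr)^t$, hence $N\lesssim dt/\lambda$, and with $n^{\alpha}\sim t/\log^2 t$ one reads off $N\sim t^{s-1}/\log^{2s-4}t$.

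Both of your closing speculations are therefore off. The $K_s$-freeness is used only to make the sampled graph $K_s$-free, never to bound $i_t(G)$; your proposed recursion into neighborhoods is tailored to counting cliques, not independent sets, since no vertex of an independent set lies in another's neighborhood. And the exact value $\alpha=\tfrac{1}{2s-3}$ is not essential either: the paper's Theorem~\ref{thm:main} derives the same lower bound from weak optimality alone.
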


Theorem~\ref{thm:mubayi-verstraete} relies heavily on a lemma of
Alon and R\"odl \cite{AlRo}, which was originally used to prove the following bound on the multicolor Ramsey number $r_{k}(s,t)\coloneqq r(s,\ldots,s,t)$
where $s$ appears $k$ times.
\begin{theorem}
\label{thm:alon-rodl}(Alon and R\"odl \cite{AlRo}.) For all $k\ge1$,
\[
\Omega\Big(\frac{t^{k+1}}{\log^{2k}t}\Big)\le r_{k}(3,t)\le O\Big(\frac{t^{k+1}}{\log^{k}t}\Big),
\]
where the implicit constants may depend only on $k$.
\end{theorem}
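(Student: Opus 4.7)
The plan is to prove the upper and lower bounds separately, by very different methods.

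For the upper bound, I would use an iterated application of the Ajtai--Koml\'os--Szemer\'edi theorem, which says that any triangle-free graph on $N$ vertices with average degree $d$ contains an independent set of size $\Omega(N\log d/d)$. Given a $(k+1)$-coloring of $K_N$ with color classes $G_1,\ldots,G_{k+1}$, where $G_1,\ldots,G_k$ are triangle-free, the goal is to find an independent set in $G_1\cup\cdots\cup G_k$ of size $t$, which will be a $K_t$ in color $k+1$. I would apply AKS to $G_1$ to find an independent set $S_1$, restrict $G_2,\ldots,G_k$ to $S_1$ (they remain triangle-free), and iterate $k$ times. The exponent $k+1$ in the final bound comes from balancing the product of $k$ AKS losses of a $\log d/d$ factor each against the natural degree scale $d\sim t$. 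A separate case splits off vertices of very high degree in a single color, whose monochromatic neighborhood is an independent set in that color and allows induction on $k$ after passing to the forbidden neighborhood.

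For the lower bound, I would construct the $(k+1)$-coloring probabilistically from $k$ copies of Alon's optimal $K_3$-free $(n,d,\lambda)$-graph $H$, which has $d=\Theta(n^{2/3})$ and $\lambda=\Theta(n^{1/3})$. Take $k$ random relabelings $H_1,\ldots,H_k$ of $H$, color each edge of $K_n$ by the smallest index $i$ for which the edge lies in $H_i$, and use color $k+1$ for edges in none of the $H_i$. Each color class $i\le k$ inherits triangle-freeness from $H_i$, so the only thing to check is the absence of $K_t$ in color $k+1$, which amounts to showing that $\alpha(H_1\cup\cdots\cup H_k)<t$ with positive probability over the random relabelings. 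One would attempt this via a union bound over $t$-subsets $S\subseteq V$, using pseudorandomness of each $H_i$ to estimate the probability that $S$ is independent in $H_i$.

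The main obstacle lies in this last step: obtaining a bound on $\alpha(\bigcup H_i)$ strong enough to force $n\sim t^{k+1}/\log^{2k} t$. A naive union bound fails, because random permutations of Alon's graph preserve its large $\Theta(n^{2/3})$-sized independent sets, so the probability that a fixed set $S$ is independent in a random copy of $H$ is far larger than the Poisson-style estimate $\exp(-\Omega(t^2 d/n))$ would predict. To overcome this, I would replace the copies $H_i$ by suitably chosen sparser random triangle-free subgraphs $G_i\subseteq H_i$, for instance by keeping each edge of $H_i$ independently with probability $p=n^{-\beta}$ for a carefully chosen $\beta$ depending on $k$. The extra randomness decorrelates the $G_i$'s and spreads out the large independent sets, while triangle-freeness is automatically preserved. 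Optimizing $\beta$ so that simultaneously $\alpha(\bigcup G_i)\le t$ with high probability and $n$ can be taken as large as $\Theta(t^{k+1}/\log^{2k} t)$ is the heart of the construction, and is presumably the content of the key lemma of Alon and R\"odl referenced in the excerpt.
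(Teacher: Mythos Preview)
Your lower-bound construction (random relabelings $H_1,\ldots,H_k$ of Alon's triangle-free $(n,d,\lambda)$-graph, last color on the complement) and your diagnosis of the obstacle are both correct: a naive union bound over $t$-sets fails because $H$ has independent sets of size $\Theta(n^{2/3})\gg t$. But your proposed remedy, random edge-sparsification $G_i\subseteq H_i$, goes in the wrong direction. Deleting edges can only \emph{increase} the number of independent $t$-sets, hence only increase $\Pr[S\text{ independent in }G_i]$ for every fixed $S$; no amount of ``decorrelation'' helps, since the relabelings are already independent. The actual content of the Alon--R\"odl lemma is a direct \emph{count}: in any $(n,d,\lambda)$-graph with $t\ge 2n\log^2 n/d$, the number of pairwise non-adjacent $t$-tuples is at most $(4en\lambda/d)^t$. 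This is proved by picking $v_1,\ldots,v_t$ greedily and using the expander mixing lemma to show that, except on at most $(2n/d)\log n$ ``bad'' steps, each $v_j$ has at most $2n\lambda/d$ valid choices. Plugging in $\lambda/d=\Theta(n^{-1/3})$ gives $i_t(H)/\binom{n}{t}\le (C\lambda/d)^t$, exactly what the union bound over $\binom{N}{t}$ sets needs when $N=\Theta(t^{k+1}/\log^{2k}t)$. No sparsification is involved; if anything, one \emph{blows up} $H$ to reach the target $N$.

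Your upper-bound plan also has a real gap. Iterating AKS across the colors does not give the exponent $k+1$ once $k\ge 2$. With the high-degree case handled (so that each $G_i$ has maximum degree at most $D=O(t^{k}/\log^{k-1}t)$ by induction on $k$), one AKS step on $G_1$ yields $|S_1|=\Omega(N\log D/D)=\Omega(t)$ when $N=\Theta(t^{k+1}/\log^k t)$. But $|S_1|=\Theta(t)\ll D$, so the second AKS step on $G_2|_{S_1}$ returns only $\Omega(\log t)$ vertices, and the iteration collapses. More generally, any scheme that alternates ``pass to a monochromatic neighborhood'' with ``apply AKS in one color'' satisfies a recursion of the form $r_k(3,t)\lesssim r_{k-1}(3,t)^2/\log$, which gives exponent $2^k$, not $k+1$. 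The paper (following Alon--R\"odl) avoids iteration entirely: it works with $T=G_1\cup\cdots\cup G_k$, uses induction on $k$ twice to bound both the maximum degree $D$ of $T$ and the maximum number of edges $D'$ inside any $T$-neighborhood (the latter via a two-vertex pigeonhole on color pairs), and then applies the Alon--Krivelevich--Sudakov theorem once to the locally sparse graph $T$ to get $\alpha(T)=\Omega\big(\tfrac{N}{D}\log\tfrac{D^2}{D'}\big)=\Omega\big(\tfrac{N\log t}{D}\big)$. The single-shot use of the sparse-neighborhood bound is what produces the correct exponent.
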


Note that Theorem~\ref{thm:alon-rodl} depends on the existence of optimal $K_{3}$-free $(n,d,\lambda)$-graphs, which were constructed by Alon \cite{Al}. 

Our main result is the following natural common generalization of Theorems~\ref{thm:mubayi-verstraete}
and~\ref{thm:alon-rodl}, which also replaces the assumption of optimality by that of weak optimality.
\begin{theorem}
\label{thm:main}If $s_{1},\ldots,s_{k}\ge3$, $S=\sum_{i=1}^{k}(s_{i}-2)$,
and for each $1\le i\le k$ there exist weakly optimal $K_{s_{i}}$-free
$(n,d,\lambda)$-graphs for all $n$, then
\begin{equation}
\Omega\Big(\frac{t^{S+1}}{\log^{2S}t}\Big)\le r(s_{1},\ldots,s_{k},t)\le O\Big(\frac{t^{S+1}}{\log^{S}t}\Big),\label{eq:main}
\end{equation}
where the implicit constants may depend only on $S$ and the weak optimality parameters $\alpha_1,\ldots,\alpha_k$.
\end{theorem}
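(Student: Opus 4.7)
The upper bound $r(s_1,\ldots,s_k,t) \le O(t^{S+1}/\log^S t)$ is a standard multicolor extension of the Ajtai--Koml\'os--Szemer\'edi bound, generalizing both the $k=1$ case (Mubayi--Verstra\"ete) and the $s_i=3$ case (Alon--R\"odl), and can be obtained by an iterated $K_{s_i}$-free independence argument across the $k$ colors. I will focus on the lower bound, which is the half that uses the pseudorandom hypotheses.

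\emph{Construction.} For each $1 \le i \le k$, let $G_i$ be a weakly optimal $K_{s_i}$-free $(n_i, d_i, \lambda_i)$-graph on vertex set $V_i$, where the sizes $n_i$ will be chosen (as powers of a single common parameter) so as to balance the final counts. Set $V = V_1 \times \cdots \times V_k$ with projections $\pi_i \colon V \to V_i$, and sample a random $R \subseteq V$ of size $N$. Color each edge $\{\mathbf{u},\mathbf{v}\}$ of $K_R$ with the least $i \in [k]$ such that $\{\pi_i(\mathbf{u}),\pi_i(\mathbf{v})\} \in E(G_i)$, and with color $k+1$ if no such $i$ exists. A monochromatic $K_{s_i}$ in color $i$ would force the projections $\pi_i(\mathbf{v}^{(1)}),\ldots,\pi_i(\mathbf{v}^{(s_i)})$ to be distinct and to form a $K_{s_i}$ in $G_i$, which is impossible; hence colors $1,\ldots,k$ are automatically $K_{s_i}$-free.

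\emph{Color $k+1$.} A monochromatic $K_t$ in color $k+1$ is a $t$-set $T \subseteq R$ whose projections $\pi_i(T) \subseteq V_i$ are all independent in the respective $G_i$. I bound the expected number of such $t$-sets in $R$ by a first-moment argument with two coordinate-wise inputs: the expander mixing lemma gives $|I| \le O(n_i\lambda_i/d_i) = O(n_i^{1-(s_i-2)\alpha_i})$ for every independent set $I$ in $G_i$, while a Mubayi--Verstra\"ete-style container estimate bounds the number of independent sets of each given size in $G_i$. Multiplying the $k$ coordinate bounds and applying a standard alteration step (deleting one vertex from each bad $t$-set) produces a valid coloring on $\Omega(N)$ vertices as long as $N \le C t^{S+1}/\log^{2S}t$, for a constant $C$ depending on $S$ and the $\alpha_i$.

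\emph{Main obstacle.} The central difficulty is the joint count of ``product-independent'' $t$-sets across the $k$ pseudorandom graphs --- for $k=1$ this reduces to the Mubayi--Verstra\"ete embedding lemma, and for $s_i=3$ throughout to the Alon--R\"odl lemma. Since we only assume weak (rather than full) optimality, the ratios $\lambda_i/d_i$ depend on each $\alpha_i$ individually and do not collapse to the uniform scaling $1/\sqrt{d_i}$ that the previous two proofs exploit. Fortunately, the blow-up observation from the introduction allows each $n_i$ to be chosen as an arbitrary large monomial in a common parameter $n$, and a careful balancing of these exponents should cause the $\alpha_i$-dependence to cancel out of the first-moment count, leaving only the $s_i$-dependence through $S = \sum_i(s_i-2)$ and yielding the claimed bound with $\log^{2S} t$ in the denominator.
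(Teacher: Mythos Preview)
Your product construction $V = V_1 \times \cdots \times V_k$ with a random $N$-point sample is essentially the paper's construction in disguise: the paper builds each $G_i(N)$ by a uniform random map $f_i\colon[N]\to V_i$ (a ``random blowup'') and then overlays the $k$ resulting graphs via independent random permutations, which is exactly a single random map $[N]\to V_1\times\cdots\times V_k$. So at the level of the coloring you and the paper agree.

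Where your proposal is incomplete is the counting step. The paper does not use a ``container estimate'' or the independence-number bound $|I|\le O(n_i\lambda_i/d_i)$; the key input is the Alon--R\"odl lemma, which says that in an $(n,d,\lambda)$-graph, once $t\ge 2n\log^2 n/d$, the number of \emph{ordered} nonadjacent $t$-tuples is at most $(4en\lambda/d)^t$. Applied coordinatewise this gives $i_t(G_i(N))\le \big(2e^2\lambda_i N/(n_i\log^2 n_i)\big)^t$, and then a direct union bound over $\binom{N}{t}$ many $t$-sets (no alteration needed) finishes. Your sketch never isolates this lemma, and ``container estimate'' is not the right pointer.

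More importantly, the ``main obstacle'' you flag is not an obstacle, and the blow-up observation from the introduction is not needed. Simply take $n_i=\Theta\big((t/\log^2 t)^{1/\alpha_i}\big)$, so that the threshold $t\ge 2n_i\log^2 n_i/d_i$ is met for every $i$. Then
\[
\frac{\lambda_i}{d_i}=\Theta\big(n_i^{-\alpha_i(s_i-2)}\big)=\Theta\Big(\Big(\frac{t}{\log^2 t}\Big)^{-(s_i-2)}\Big),
\]
and the $\alpha_i$'s drop out of $\prod_i(\lambda_i/d_i)$ automatically, leaving $\Theta\big((t/\log^2 t)^{-S}\big)$. This cancellation is precisely why the definition of ``weakly optimal'' ties $\lambda$ to $n^{1-(s-1)\alpha}$; no further balancing of exponents is required.
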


Like Theorems~\ref{thm:mubayi-verstraete} and~\ref{thm:alon-rodl},
Theorem~\ref{thm:main} is a consequence of a lemma of Alon and R\"odl
\cite{AlRo} which shows that an $(n,d,\lambda)$-graph has few independent
sets of order just over $n/d$. We will need the following slightly stronger version,
which is proved in exactly the same way.
\begin{lemma}
\label{lem:alon-rodl}If $G$ is an $(n,d,\lambda)$-graph and $t\ge\frac{2n\log^{2}n}{d}$,
then the number of $t$-tuples $(v_{1},\ldots,v_{t})\in V(G)^t$ of vertices of $G$, no pair of which are adjacent, is at most
\[
\Big(\frac{4en\lambda}{d}\Big)^{t}.
\]
\end{lemma}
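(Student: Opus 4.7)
The plan is to follow the Alon--R\"odl double-counting strategy, adapted from counting independent sets to counting tuples. The key spectral input is the expander mixing lemma (EML), and the main complication is that we must sum over the size of the tuple's image rather than fixing it in advance.

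First I would establish the key EML consequence: for any independent set $S \subseteq V(G)$ of size $k$, writing $I(S) \coloneqq V(G) \setminus N(S)$ (the set of vertices nonadjacent to all of $S$), since $e(S, I(S)) = 0$ the EML applied to $(S, I(S))$ yields $\frac{dk|I(S)|}{n} \le \lambda\sqrt{k \cdot |I(S)|}$, and hence $|I(S)| \le L^2/k$ where $L \coloneqq \lambda n/d$. In particular every independent set of $G$ has size at most $L$, and the number $\mathcal{I}_k$ of independent sets of size $k$ can be bounded by a greedy construction: at the $i$-th step there are at most $\min(n, L^2/(i-1))$ valid choices for the next vertex, giving $k! \cdot \mathcal{I}_k \le n\prod_{i=1}^{k-1}\min(n, L^2/i)$.

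Next I would group indep $t$-tuples by the size $k \in \{1, \ldots, L\}$ of their image. Since the number of surjections $[t] \twoheadrightarrow [k]$ is at most $k^t$, the total count is bounded by $\sum_{k=1}^{L} \mathcal{I}_k \cdot k^t$. Plugging in the greedy estimate, splitting the product at the crossover $i^{*} \coloneqq L^2/n$ (below which $L^2/i > n$ so the trivial bound dominates, above which $L^2/i \le n$ is used), and applying Stirling's formula to the resulting factorials, each term of the sum takes the form $n^{O(i^*)} \cdot e^{O(L)} \cdot k^{t-O(k)} \cdot L^{O(k)}$. The sum is dominated by $k$ near $L$, and the hypothesis $t \ge 2n\log^2 n/d$ is precisely what is needed to absorb the junk factors (the $n^{i^*}$ from the trivial bound $|I(S)| \le n$ for small $|S|$, and $e^{O(L)}$ from Stirling) into the slack $(4e)^t$ in the final bound $(4eL)^t$. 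The hard part will be carrying out this final estimation cleanly; the $\log^2 n$ in the hypothesis, rather than merely $\log n$, is what allows all the error terms to fit simultaneously.
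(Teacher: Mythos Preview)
Your approach has a real gap: the ``junk factors'' $n^{i^*}$ and $e^{O(L)}$ are \emph{not} absorbed by the hypothesis $t\ge 2n\log^2 n/d$. Write $L=\lambda n/d$. Absorbing $e^{O(L)}$ into $(4e)^t$ needs $L=O(t)$, i.e.\ $\lambda=O(\log^2 n)$; absorbing $n^{i^*}$ needs $i^*\log n=O(t)$, i.e.\ $\lambda^2=O(d\log n)$. Neither is implied by the lemma's hypotheses, and both fail in the paper's intended applications (where $\lambda$ is a fixed positive power of $n$). Concretely, your bound on the dominant term works out to roughly $n\,e^{2L}L^{t}/L^2$ when the maximum of $\mathcal I_k k^t$ is taken at $k=\min(t,L)$, and this exceeds $(4eL)^t$ once $\lambda\gg\log^2 n$. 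The underlying reason is that your EML input $|I(S)|\le L^2/|S|$ is much weaker than $2L$ when $|S|$ is small, and the looseness accumulates multiplicatively over the early greedy steps.

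The paper's proof avoids this by applying EML differently. Instead of bounding the whole candidate set $S_k=I(\{v_1,\dots,v_{k-1}\})$, it splits $S_k$ into the ``low-degree'' part $T_k=\{v\in S_k: |N(v)\cap S_k|<\tfrac{d}{2n}|S_k|\}$ and its complement. EML applied to the pair $(S_k,T_k)$ gives $|T_k|<2L$ \emph{uniformly in $k$}, while choosing $v_k\in S_k\setminus T_k$ forces $|S_{k+1}|<e^{-d/2n}|S_k|$ and so can occur at most $t'=\tfrac{2n}{d}\log n$ times. This yields the clean count $\binom{t}{t'}n^{t'}(2L)^t$, and now the hypothesis $t\ge 2n\log^2 n/d$ is exactly what makes $n^{t'}\le e^t$. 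Your decomposition by image size is not needed once one has this uniform bound on the ``non-expanding'' candidates; that trick is the missing idea.
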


In the next section we prove the lower bound in Theorem~\ref{thm:main}. The proofs of Lemma~\ref{lem:alon-rodl} and the upper bound in Theorem~\ref{thm:main} are relatively standard and are confined to the appendix.

\section{The Proof}

The main difficulty in applying Lemma~\ref{lem:alon-rodl} to construct Ramsey graphs is rescaling a given $(n,d,\lambda)$-graph to have the
appropriate number of vertices. The proofs of Theorems~\ref{thm:mubayi-verstraete}
and~\ref{thm:alon-rodl} each provide half the picture. In the proof of Theorem
\ref{thm:mubayi-verstraete}, a $K_{s}$-free $(n,d,\lambda)$-graph
is scaled down to a smaller $K_s$-free graph with no independent sets of size
$t$ by sampling a random induced subgraph. In the proof of Theorem~\ref{thm:alon-rodl},
a $K_{3}$-free $(n,d,\lambda)$-graph is scaled up to a larger $K_3$-free graph
with few independent sets by performing a balanced blowup. 

The natural common generalization of these two constructions is a random blowup; using random blowups, we will be able to scale the weakly optimal $K_{s}$-free $(n,d,\lambda)$-graphs
to $K_{s}$-free graphs of any size with few independent sets.
Define $i_{t}(G)$ to be the number of independent sets of order $t$
in $G$.
\begin{lemma}
\label{lem:main}If there exists a $K_{s}$-free $(n,d,\lambda)$-graph
$G$ and $t\ge\frac{2n\log^{2}n}{d}$, then for every $N$ there exists
a $K_{s}$-free graph $G(N)$ on $N$ vertices with
\[
i_{t}(G(N))\le\Big(\frac{2e^{2}\lambda N}{n\log^{2}n}\Big)^{t}.
\]
\end{lemma}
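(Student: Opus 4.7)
The plan is to construct $G(N)$ as a random blowup of $G$. Concretely, I would take $N$ new vertices $u_1, \ldots, u_N$, assign each an independent uniformly random label $\ell(u_i) \in V(G)$, and declare $u_i u_j$ an edge of $G(N)$ iff $\ell(u_i)\ell(u_j) \in E(G)$. This construction is precisely the natural interpolation the authors mention: it becomes a random induced subgraph of $G$ when $N \le n$ and a balanced blowup when $N$ is a large multiple of $n$.

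The $K_s$-freeness of $G(N)$ is immediate: any clique in $G(N)$ uses at most one vertex per label class (since same-label vertices are non-adjacent), so projecting by $\ell$ embeds the clique into $G$, forcing $\omega(G(N)) \le \omega(G) < s$. For the count of independent $t$-sets, I would run a first-moment argument. Fix a $t$-subset $T = \{u_{i_1}, \ldots, u_{i_t}\}$ of $V(G(N))$: it is independent in $G(N)$ exactly when the ordered label tuple $(\ell(u_{i_1}), \ldots, \ell(u_{i_t})) \in V(G)^t$ contains no adjacent pair, where equal entries are automatically non-adjacent because $G$ has no loops. Since the labels are i.i.d.\ uniform, the probability of this event equals the number of such tuples divided by $n^t$. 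By Lemma~\ref{lem:alon-rodl}, the hypothesis $t \ge 2n\log^2 n / d$ guarantees at most $(4en\lambda/d)^t$ such tuples, so $T$ is independent with probability at most $(4e\lambda/d)^t$.

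Summing over all $\binom{N}{t} \le (eN/t)^t$ choices of $T$ gives
\[
\mathbb{E}\bigl[i_t(G(N))\bigr] \le \Bigl(\frac{4e^2 \lambda N}{td}\Bigr)^{t} \le \Bigl(\frac{2e^2 \lambda N}{n\log^2 n}\Bigr)^{t},
\]
where the last step uses $td \ge 2n\log^2 n$ once more. The probabilistic method then yields a concrete realization of $G(N)$ meeting the stated bound, and this realization is $K_s$-free by the above.

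I do not anticipate a real obstacle: the random blowup is essentially forced by the authors' own dichotomy, and the rest is bookkeeping. The one spot that needs a moment of care is verifying that Lemma~\ref{lem:alon-rodl}, which counts independent \emph{tuples} (possibly with repeated entries) in $G$, is exactly the right quantity to govern the number of independent \emph{sets} in the random blowup; this works precisely because repetitions in the label tuple always correspond to a non-edge in $G(N)$.
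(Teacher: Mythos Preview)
Your proposal is correct and follows essentially the same argument as the paper: the random blowup via i.i.d.\ uniform labels is exactly the paper's uniform random map $f:[N]\to V(G)$, and the first-moment computation with Lemma~\ref{lem:alon-rodl} and the bound $\binom{N}{t}\le(eN/t)^t$ is identical. If anything, you are slightly more careful than the paper in flagging why the tuple count from Lemma~\ref{lem:alon-rodl} (allowing repeats) is the right object, and in writing the intermediate step as an inequality rather than an equality.
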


\begin{proof}
We will define $G(N)$ as follows. Pick a uniform random map $f:[N]\rightarrow G$,
and let $G(N)$ be the graph on $[N]$ whose edges are exactly the
pairs $(i,j)$ that map to edges in $G$. Since $G$ is $K_{s}$-free,
so is $G(N)$. It suffices to prove the desired upper bound on $\mathbb{E}[i_{t}(G(N))]$.

By Lemma~\ref{lem:alon-rodl} (proved in Appendix~\ref{apx:alon-rodl}) and linearity of expectation, 
\begin{eqnarray*}
\mathbb{E}[i_{t}(G(N))] & = & \binom{N}{t}\Pr[f([t])\text{ is an independent set}]\\
 & = & \binom{N}{t}\frac{\big(\frac{4e\lambda n}{d}\big)^{t}}{n^{t}},
\end{eqnarray*}
since $f([t])$ is a uniform random $t$-tuple in $V(G)^{t}$. Bounding
$\binom{N}{t}\le\big(\frac{eN}{t}\big)^{t}$, we find that with positive
probability,
\[
i_{t}(G(N))\le\Big(\frac{eN}{t}\Big)^{t}\Big(\frac{4e\lambda}{d}\Big)^{t}\le\Big(\frac{2e^{2}\lambda N}{n\log^{2}n}\Big)^{t}
\]
since $t\ge\frac{2n\log^{2}n}{d}$.
\end{proof}
We are ready to prove the main result. The upper bound is proved in Appendix~\ref{apx:main-upper}.
\begin{proof}[Proof of the lower bound in Theorem~\ref{thm:main}.] 
Henceforth all implicit constants are allowed to depend on $S=\sum_{i=1}^k(s_i-2)$ and on the weak optimality parameters $\alpha_1,\ldots,\alpha_k$. Let $G_{i}$ be a weakly
optimal $K_{s_{i}}$-free $(n_{i},d_{i},\lambda_{i})$-graph, where $d_i=\Theta(n_i^{1-\alpha_i})$ and $\lambda_i=\Theta(n_i^{1-(s_i-1)\alpha_i})$. As these are assumed to exist for all $n_i$, we pick 
\[
n_{i}=\Theta\Big(\Big(\frac{t}{\log^{2}t}\Big)^{1/\alpha_i}\Big)
\]
so that with $d_{i}=\Theta(n_{i}^{1-\alpha_i})$,
the bound $t \ge \frac{2n_{i}\log^{2}n_{i}}{d_{i}}$ holds.
Take
\[
N=\Theta\Big(\frac{t^{S+1}}{\log^{2S}t}\Big),
\]
the implicit constant to be chosen later. Rescaling each $G_{i}$ to
a $G_{i}(N)$ on $N$ vertices satisfying Lemma~\ref{lem:main}, we
get $k$ graphs $G_{i}(N)$ on the same vertex set $[N]$ such that $G_{i}(N)$ is $K_{s_{i}}$-free
and
\begin{equation}
i_{t}(G_{i}(N))\le\Big(\frac{2e^{2}\lambda_{i}N}{n_{i}\log^{2}n_{i}}\Big)^{t}.\label{eq:ind-sets}
\end{equation}
We define a random $(k+1)$-coloring of $\binom{[N]}{2}$ so that in each of
the first $k$ colors, the edges form a subgraph of $G_{i}(N)$. To
do so, simply take a uniform random vertex permutation of $G_{i}(N)$
as the edges in the $i$-th color; when multiple colors are given
to the same edge, break ties arbitrarily. All remaining edges are given color $k+1$. 

This $(k+1)$-colored graph has
no monochromatic $K_{s_{i}}$ in any of the first $k$ colors. It
remains to show that with positive probability, it has no $K_{t}$ in
the last color. Indeed, the probability that a given set $I$ of order
$t$ induces a $K_{t}$ in the last color is exactly the product
\[
\prod_{i=1}^{k}\frac{i_{t}(G_{i}(N))}{\binom{N}{t}},
\]
since $I$ must be an independent set in each of the first $k$ colors. By (\ref{eq:ind-sets}), we have that
\begin{eqnarray*}
\prod_{i=1}^{k}\frac{i_{t}(G_{i}(N))}{\binom{N}{t}} & \le & \prod_{i=1}^{k}\Big(\frac{2e^{2}\lambda_{i}N}{n_{i}\log^{2}n_{i}}\Big)^{t}/\Big(\frac{N}{t}\Big)^{t}\\
 & \le & \prod_{i=1}^{k}(C\lambda_{i}/d_{i})^{t}
\end{eqnarray*}
for an absolute constant $C>0$. With our choices of $\lambda_{i}$
and $d_{i}$,
\[
\frac{\lambda_{i}}{d_{i}}=\Theta \left( n_i^{-\alpha_i(s_i-2)} \right)=\Theta\left( \left(\frac t{\log^2 t}\right)^{-(s_i-2)} \right).
\]
By taking a union bound over all $I$, the probability that there exists a $K_{t}$
in the last color is at most
\[
\binom{N}{t}\prod_{i=1}^{k}O\Big(\Big(\frac{t}{\log^{2}t}\Big)^{-(s_{i}-2)}\Big)^{t}\le O\Big(\frac{N}{t}\Big(\frac{t}{\log^{2}t}\Big)^{-S}\Big)^{t}<1
\]
for the appropriate choice of the constant in the definition of $N$.
This completes the proof.
\end{proof}

\noindent {\bf Acknowledgements.} The authors would like to thank Ryan Alweiss and Jacob Fox for helpful discussions on this problem. We are also grateful to Anurag Bishnoi for bringing reference \cite{KrSuSz} to our attention.

\appendix
\section{Proof of Lemma~\ref{lem:alon-rodl}}\label{apx:alon-rodl}
We give a short proof of Lemma~\ref{lem:alon-rodl} using the Expander Mixing Lemma (see e.g. \cite[Corollary 9.2.5]{AlSp}).
\begin{lemma}
\label{lem:expander-mixing}(Expander Mixing Lemma.) If $G$ is an
$(n,d,\lambda)$-graph and $S,T\subseteq V(G)$, then
\[
|e(S,T)-\frac{d}{n}|S||T||<\lambda\sqrt{|S||T|}.
\]
\end{lemma}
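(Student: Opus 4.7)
The plan is the standard spectral-graph-theoretic one. Let $A$ denote the adjacency matrix of $G$. Since $G$ is $d$-regular, the all-ones vector $\mathbf{1}$ is an eigenvector of $A$ with eigenvalue $d$, and by the definition of an $(n,d,\lambda)$-graph every remaining eigenvalue $\mu$ of $A$ satisfies $|\mu|\le\lambda$. Because $A$ is symmetric, I would fix an orthonormal eigenbasis $v_1=\mathbf{1}/\sqrt{n},\,v_2,\ldots,v_n$ with eigenvalues $\mu_1=d$ and $|\mu_i|\le\lambda$ for $i\ge 2$.

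The first step is to write the edge count as a bilinear form, $e(S,T)=\mathbf{1}_S^{\top}A\,\mathbf{1}_T$, where $\mathbf{1}_S,\mathbf{1}_T$ are the indicator vectors of $S$ and $T$ (so that each ordered pair $(s,t)\in S\times T$ with $st\in E(G)$ is counted once). Expanding in the eigenbasis as $\mathbf{1}_S=\sum_i \alpha_i v_i$ and $\mathbf{1}_T=\sum_i \beta_i v_i$, I note that $\alpha_1=|S|/\sqrt{n}$, $\beta_1=|T|/\sqrt{n}$, and that $\sum_i \alpha_i^2=|S|$, $\sum_i \beta_i^2=|T|$ by Parseval. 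Plugging in gives
\[
e(S,T) \;=\; \sum_{i=1}^{n}\mu_i\,\alpha_i\beta_i \;=\; \frac{d|S||T|}{n} \;+\; \sum_{i\ge 2}\mu_i\,\alpha_i\beta_i,
\]
which cleanly separates the Perron contribution from an ``error'' term.

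The final step is to bound the error using Cauchy--Schwarz together with the spectral bound $|\mu_i|\le\lambda$ for $i\ge 2$:
\[
\Bigl|\sum_{i\ge 2}\mu_i\,\alpha_i\beta_i\Bigr| \;\le\; \lambda\sum_{i\ge 2}|\alpha_i||\beta_i| \;\le\; \lambda\sqrt{\sum_{i\ge 2}\alpha_i^{\,2}}\sqrt{\sum_{i\ge 2}\beta_i^{\,2}} \;\le\; \lambda\sqrt{|S||T|}.
\]
There is essentially no obstacle here --- once the spectral setup is in place, the entire proof is three lines of linear algebra. The only minor wrinkle is that the lemma is stated with a \emph{strict} inequality whereas the chain above only yields ``$\le$''; strictness would follow by observing that Cauchy--Schwarz and the eigenvalue bound cannot both be tight on the indicator vectors of two nontrivial subsets of a connected $(n,d,\lambda)$-graph. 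In any case, standard references such as \cite[Corollary~9.2.5]{AlSp} state the non-strict form, and the distinction plays no role in the downstream application to Lemma~\ref{lem:alon-rodl}.
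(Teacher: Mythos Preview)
Your argument is the standard spectral proof of the Expander Mixing Lemma and is correct. Note that the paper does not actually prove this lemma but simply quotes it with a reference to \cite[Corollary~9.2.5]{AlSp}; your write-up is precisely the argument found there (yielding the non-strict inequality, as you correctly observe, which is all that is used downstream).
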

\noindent Here $e(S,T)$ denotes the number of ordered pairs $(s,t)\in S\times T$
which are edges of $G$.
\begin{proof}[Proof of Lemma~\ref{lem:alon-rodl}.]
 We count the number of ways to pick $v_{1},\ldots,v_{t}$ one-by-one.
Let $S_{k}$ be the set of all vertices with no edges to $v_{1},\ldots,v_{k-1}$
(including $v_{1},\ldots,v_{k-1}$), and let $T_{k}=\{v\in S_{k}:|N(v)\cap S_{k}|<\frac{d}{2n}|S_{k}|\}$.
Thus, $S_{k}$ is the set of all valid candidates for $v_{k}$, and
$T_{k}$ is the subset of valid candidates for which $S_{k+1}$ is
not much smaller than $S_{k}$. In particular, every time we choose
$v_{k}\in S_{k}\backslash T_{k}$, we find that
\[
|S_{k+1}|\le(1-\frac{d}{2n})|S_{k}|< e^{-\frac{d}{2n}}|S_{k}|,
\]
so since $|S_{0}|=n$, the total number of $k$ for which $v_{k}$
can be chosen from $S_{k}\backslash T_{k}$ is bounded by $t'=\frac{2n}{d}\log n$.

On the other hand, by the definition of $T_{k}$ we have $e(S_{k},T_{k})<\frac{d}{2n}|S_{k}||T_{k}|$,
and so applying Lemma~\ref{lem:expander-mixing} we get
\[
\frac{d}{2n}|S_{k}||T_{k}|<\lambda\sqrt{|S_{k}||T_{k}|}.
\]
In particular, since $T_{k}\subseteq S_{k}$, we have
\[
|T_{k}|<\frac{2n\lambda}{d}.
\]

Thus, the total number of sequences $v_{1},\ldots,v_{t}$ where all
pairs are not adjacent is bounded by
\[
\binom{t}{t'}n^{t'}\Big(\frac{2n\lambda}{d}\Big)^{t},
\]
since we can choose the $t'$ steps on which $v_{k}\in S_{k}\backslash T_{k}$
in $\binom{t}{t'}$ ways, the number of such choices is bounded by
$n$ on each step, and in all the other steps the number of choices
for $v_{k}$ is at most $|T_{k}|<\frac{2n\lambda}{d}$. Bounding $\binom{t}{t'}<2^{t}$
and $n^{t'}<n^{t/\log n}=e^{t}$, we obtain a bound of
\[
\Big(\frac{4en\lambda}{d}\Big)^{t},
\]
as claimed.
\end{proof}

\section{The upper bound in Theorem~\ref{thm:main}}\label{apx:main-upper}
 Alon and R\"odl \cite{AlRo} proved the upper bound in (\ref{eq:main})
when $s_{1}=s_{2}=\cdots=s_{k}=3$, and our proof is a generalization
of theirs.
\begin{proof}[Proof of the upper bound in Theorem~\ref{thm:main}.]
We fix $k$ and induct on $S$. The base case $S=1$ is just $r(2,2,\ldots,2,3,t)=O(t^{2}/\log t)$
for any number of $2$'s, by Ajtai, Koml\'os and Szemer\'edi~\cite{AjKoSz}. Assume by induction that there exist absolute
constants $C_{S'}>0$ for all $S'<S$ such that for all vectors $(s_{1},\ldots,s_{k})$
with $s_{i}\ge2$ and $\sum_{i=1}^{k}(s_{i}-2)=S'$,
\[
r(s_{1},\ldots,s_{k},t)\le n_{S'}\coloneqq\frac{C_{S'}t^{S'+1}}{\log^{S'}t}.
\]
Now let $n_{S}=C_{S}t^{S+1}/\log^{S}t$ for some $C_{S}$ to be determined,
and suppose we are given a $(k+1)$-coloring of $K_{n_{S}}$ such
that there is no monochromatic $K_{s_{i}}$ of color $i$, nor a monochromatic
$K_{t}$ of color $k+1$. Define $T$ to be the spanning subgraph
of $K_{n_{S}}$ obtained by taking only the edges of the first $k$
colors. If $D$ is the maximum degree in $T$, then
\begin{equation}
D<kn_{S-1},\label{eq:D}
\end{equation}
If (\ref{eq:D}) is false, then there is a vertex $v\in V(T)$ and
some color $i\le k$ such that $v$ is incident to at least
\[
n_{S-1}\ge r(s_{1},\ldots,s_{i}-1,\ldots,s_{k},t)
\]
edges of color $i$. The induced subgraph on the set of vertices connected
to $v$ by color $i$ must not contain a monochromatic clique $K_{s_{j}}$
of any color $j\ne i$, so there will be a $K_{s_{i}-1}$ of color
$i$ inside. But then this forms a $K_{s_{i}}$ of color $i$ together
with $v$, which is a contradiction. This proves inequality (\ref{eq:D}).

Next, let $D'$ denote the maximum number of edges in some neighborhood
$N_T(v)$ of a vertex in $T$. We show
\begin{equation}
D'<k^{2}Dn_{S-2}.\label{eq:D'}
\end{equation}
Suppose otherwise, and let $v$ be the vertex with the most edges
in its neighborhood. If $u\in N_T(v)$, define $d_{v}(u)$ as the number
of common neighbors $w\in N_T(v)\cap N_T(u)$ for which either $uv,uw,vw$
are all the same color, or $uw$ and $vw$ are different colors. Each
edge $uw\in N_T(v)$ contributes either once or twice to the sum of
the $d_{v}(u)$, so
\[
\sum_{u\in N_T(v)}d_{v}(u)\ge k^{2}Dn_{S-2}.
\]

In particular, there is some $u$ for which $d_{v}(u)\ge k^{2}n_{S-2}.$
We can categorize the vertices $w$ of $N_T(v)$ counted in $d_{v}(u)$
by the pair of colors of $uw$ and $vw$, and find that there exists
colors $i,j$ (not necessarily different) and a set $W$ of $n_{S-2}$
vertices such that for every $w\in W$, $uw$ is of color $i$ and
$vw$ is of color $j$. If $i\ne j$, this implies a contradiction
from the fact that
\[
|W|\ge n_{S-2}\ge r(s_{1},\ldots,s_{i}-1,\ldots,s_{j}-1,\ldots,s_{k},t).
\]
Otherwise, if $i=j$, then by the definition of $d_{v}(u)$ it must
be that $uv$ is of color $i$ as well, and so we also get a contradiction
since
\[
|W|\ge n_{S-2}\ge r(s_{1},\ldots,s_{i}-2,\ldots,s_{k},t).
\]

This proves (\ref{eq:D'}). It is a corollary of a result of Alon,
Krivelevich, and Sudakov \cite{AlKrSu} that if a graph has maximum
degree $D$ and every neighborhood has at most $D'=\frac{D^{2}}{f}$
edges, then its independence number is at least $\Omega(\frac{n\log f}{D})$.
In particular, we see that the independence number of $T$ is at least
\[
\Omega\Big(\frac{n_{S}\log t}{D}\Big),
\]
since (\ref{eq:D'}) implies $D'=O(D^{2}\log t/t)$. On the other
hand, an independent set in $T$ forms a monochromatic clique in $K_{n_{S}}$
of color $k+1$, so 
\[
t>\Omega\Big(\frac{n_{S}\log t}{D}\Big),
\]
which shows that 
\[
n_{S}<O\Big(\frac{Dt}{\log t}\Big)=O\Big(\frac{C_{S-1}t^{S+1}}{\log^{S}t}\Big).
\]
 Picking $C_{S}$ sufficiently large in terms of $C_{S-1}$, this
gives the desired contradiction.
\end{proof}
\end{document}